\theoremstyle{plain}
\newtheorem{theorem}{Theorem}[section]
\newtheorem{corollary}[theorem]{Corollary}
\theoremstyle{definition}
\theoremstyle{remark}
\renewcommand{\thefootnote}{\arabic{footnote}}
\newcommand{\C}[1]{\ensuremath{{\mathcal C}^{#1}}} 
\newcommand{\ent}{\mathbf{Ent}} 
\newcommand{\var}{\mathbf{Var}} 
\newcommand{\I}{\mathbf I}
\def\R{\mathbb R}
\def\C{\mathbb C}
\def\al{\alpha}
\def\be{\beta}
\def\ga{\gamma}
\def\de{\delta}
\def\De{\Delta} 
\def\lam{\lambda}
\def\vphi{\varphi}
\def\ep{\epsilon}
\def\na{\nabla}
\def\pa{\partial}
\def\la{\langle} 
\def\ra{\rangle} 
\def\lt{\left}
\def\rt{\right}
\def\mvb{\mu_{\varphi,\beta}}
\numberwithin{equation}{section}
\title{$\Phi-$entropy inequalities and asymmetric covariance estimates for convex measures}
\author{Van Hoang Nguyen\footnote{Institute of Research and Development, Duy Tan University, Da Nang, Vietnam.}
}
\begin{document}
\maketitle


\renewcommand{\thefootnote}{}

\footnote{Email: \href{mailto: Van Hoang Nguyen <vanhoang0610@yahoo.com>}{vanhoang0610@yahoo.com}.}

\footnote{2010 \emph{Mathematics Subject Classification\text}: 26D10.}

\footnote{\emph{Key words and phrases\text}: $\Phi-$entropy inequalities, Poincar\'e type inequalities, Beckner type inequalities, semi-group, $L^2-$method of H\"ormander, Brascamp--Lieb type inequalities, asymmetric covariance estimates, convex measures.}

\renewcommand{\thefootnote}{\arabic{footnote}}
\setcounter{footnote}{0}

\begin{abstract}
In this paper, we use the semi-group method and an adaptation of the $L^2-$method of H\"ormander to establish some $\Phi-$entropy inequalities and asymmetric covariance estimates for the strictly convex measures in $\R^n$. These inequalities extends the ones for the strictly log-concave measures to more general setting of convex measures. The $\Phi-$entropy inequalities are turned out to be sharp in the special case of Cauchy measures. Finally, we show that the similar inequalities for log-concave measures can be obtained from our results in the limiting case.
\end{abstract}

\section{Introduction}
Let $\vphi: \R^n \to (0,\infty)$ be a strictly convex, $C^2$ smooth function such that $\vphi^{-\beta}$ is integrable for some $\beta >0$. By strictly convex, we mean that the Hessian matrix, $D^2 \varphi(x) =(\partial^2_{ij} \vphi(x))_{i,j=1}^n$, of $\vphi$ is everywhere positive in the matrix sense. Let $d\mu_{\vphi,\beta}$ denote the probability measure
\[
d\mu_{\vphi,\beta} = \frac{\varphi(x)^{-\beta}}{Z_{\vphi,\beta}} dx,
\]
where $Z_{\vphi,\beta}$ is the normalization constant which turns $\mu_{\vphi,\beta}$ into a probability. The main aims of this paper is to establish several functional inequalities for the probability measure $\mu_{\vphi,\beta}$ such as $\Phi-$ entropy inequalities and asymmetric covariance estimates. These inequalities extend the $\Phi-$entropy inequalities in \cite{BolleyGentil} and the asymmetric covariance estimates in \cite{CCL} for the log-concave measure to the context of convex measures. 

Let $\Phi :I \to \R$ be a convex function on an interval $I \subset \R$ and $f: \R^n \to I$ be a measurable function such that $f$ and $\Phi(f)$ is integrable with respect to the probability measure $\mu_{\vphi,\beta}$, we define
\[
\ent_{\mu_{\vphi,\beta}}^\Phi(f) = \int_{\R^n} \Phi(f) d\mu_{\vphi,\beta} -\Phi\lt(\int_{\R^n} f d\mu_{\vphi,\beta}\rt)
\]
as the $\Phi-$entropy of $f$ under the probability measure $\mvb$. For examples, if $\Phi(x) = x^2$ then we let $\var_{\mvb}(f) = \ent_{\mvb}^\Phi(f)$ be the variance of $f$ with respect to $\mvb$, and if $\Phi(x) = x \ln x$ on $(0,\infty)$ then we let $\ent_{\mvb}(f) = \ent_{\mvb}^\Phi(f)$ be the Boltzmann entropy of a positive function $f$ with respect to $\mvb$. Notice that $\ent_{\mvb}^\Phi(f)$ is always nonnegative quantity by Jensen's inequality. We are interested in to finding the upper bound for $\ent_{\mvb}^\Phi(f)$ under some suitable conditions on $\vphi$, $\Phi$ and $\beta$. The first main result of this paper is the following theorem.

\begin{theorem}\label{Phientropy}
Let $\beta > n+1$ and $\Phi: I \to \R$ be a convex function such that
\begin{equation}\label{eq:Phicondition}
\Phi^{(4)}(t) \Phi''(t) \geq \frac1 8 \frac{(4\beta -5)^2 + n-1}{(\beta-1) (\beta -n -1)} (\Phi^{(3)}(t))^2,
\end{equation}
for any $t \in I$. Assume, in addition, that $\vphi$ is uniformly convex in $\R^n$, i.e., $D^2\vphi(x) \geq c\, \I_n$ in the matrix sence for some $c >0$. Then for any smooth function $f$ with value in $I$, we have
\begin{equation}\label{eq:Phientropyineq}
\ent_{\mvb}^\Phi(f) \leq \frac1{2c(\beta -1)} \int_{\R^n} \Phi''(f) |\na f|^2 \vphi d\mvb.
\end{equation}
\end{theorem}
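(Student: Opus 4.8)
The plan is to run the semigroup (Bakry--\'Emery) method for the diffusion naturally associated with $\mvb$. The right generator is the weighted operator
\[
L = \varphi\,\Delta - (\beta-1)\,\na\varphi\cdot\na,
\]
which an integration by parts shows to be symmetric in $L^2(\mvb)$ with carr\'e du champ $\Gamma(f)=\varphi|\na f|^2$; note that the weight $\varphi$ in $\Gamma$ is exactly the one on the right-hand side of \eqref{eq:Phientropyineq}. Writing $P_t$ for the associated semigroup and $g=P_tf$, the interpolation identity
\[
\ent_{\mvb}^\Phi(f)=\int_0^\infty J(t)\,dt,\qquad J(t)=\int_{\R^n}\Phi''(g)\,\Gamma(g)\,d\mvb,
\]
reduces the theorem to the differential inequality $-J'(t)\ge 2c(\beta-1)\,J(t)$ (modulo the standard density and regularity arguments needed to justify these manipulations): Gr\"onwall's lemma then gives $\int_0^\infty J\le J(0)/(2c(\beta-1))$, which is precisely \eqref{eq:Phientropyineq}.

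First I would differentiate $J$. Using $\partial_t g=Lg$, the diffusion chain rule $L(\psi(g))=\psi'(g)Lg+\psi''(g)\Gamma(g)$, the symmetry of $L$, and $\Gamma_2(g)=\tfrac12 L\Gamma(g)-\Gamma(g,Lg)$, one first obtains
\[
-J'(t)=2\int\Phi''(g)\Gamma_2(g)\,d\mvb-\int\Phi^{(4)}(g)\Gamma(g)^2\,d\mvb-2\int\Phi'''(g)(Lg)\Gamma(g)\,d\mvb.
\]
A second integration by parts on the last integral, via $\int (Lg)u\,d\mvb=-\int\Gamma(g,u)\,d\mvb$ with $u=\Phi'''(g)\Gamma(g)$ and the Leibniz rule $\Gamma(g,\Phi'''(g)\Gamma(g))=\Phi'''(g)\Gamma(g,\Gamma(g))+\Phi^{(4)}(g)\Gamma(g)^2$, flips the sign of the $\Phi^{(4)}$ term and yields
\[
-J'(t)=2\int\Phi''(g)\Gamma_2(g)\,d\mvb+\int\Phi^{(4)}(g)\Gamma(g)^2\,d\mvb+2\int\Phi'''(g)\,\Gamma\big(g,\Gamma(g)\big)\,d\mvb.
\]
Since \eqref{eq:Phicondition} forces $\Phi^{(4)}\ge 0$, the first two integrals are non-negative, and the whole task becomes to dominate the indefinite cross term by them.

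The heart of the matter is a Bochner identity for $L$. A direct computation gives
\[
\Gamma_2(g)=\varphi^2\|D^2 g\|^2+(\beta-1)\,\varphi\,\na g\cdot D^2\varphi\,\na g+\big(\text{lower-order terms in }D^2g,\ \na\varphi,\ \Delta\varphi\big),
\]
and the uniform convexity $D^2\varphi\ge c\,\I_n$ turns the middle term into the target curvature $c(\beta-1)\Gamma(g)$. Expanding likewise $\Gamma(g,\Gamma(g))=2\varphi^2\,\na g\cdot D^2g\,\na g+\varphi|\na g|^2(\na\varphi\cdot\na g)$, the integrand of $-J'(t)-2c(\beta-1)J(t)$ becomes, up to the manifestly non-negative curvature part, a quadratic form in $D^2g$ and $\na g$ with coefficients $\Phi''(g),\Phi'''(g),\Phi^{(4)}(g)$. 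I would then complete the square so that the diagonal Hessian term $2\Phi''\varphi^2\|D^2g\|^2$ and the gradient term $\Phi^{(4)}\varphi^2|\na g|^4$ absorb the cross term $\sim\Phi'''\varphi^2\,\na g\cdot D^2g\,\na g$, which is possible exactly when $\Phi''\Phi^{(4)}$ dominates a fixed multiple of $(\Phi''')^2$.

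The main obstacle is this completion of squares with the sharp constant. Because the cross terms see only $\na g\cdot D^2g\,\na g$ and $\na\varphi\cdot D^2g\,\na g$, and not all of $\|D^2g\|^2$, I would decompose the Hessian relative to the direction of $\na g$ into its longitudinal entry, the $n-1$ mixed entries, and the transverse block. The transverse block only helps (it carries a further non-negative contribution through $\Delta\varphi$), the single longitudinal entry produces a perfect square of a $\beta$-linear expression giving the coefficient $(4\beta-5)^2$, and completing the square in the $n-1$ mixed entries produces the summand $n-1$; balancing these against the curvature budget $c(\beta-1)$ yields the denominator $(\beta-1)(\beta-n-1)$, which explains both the exact constant in \eqref{eq:Phicondition} and why $\beta>n+1$ is required. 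The delicate point is tracking the $\na\varphi$- and $\Delta\varphi$-dependent remainder terms so that the \emph{entire} quadratic form, not merely its leading part, is non-negative under \eqref{eq:Phicondition}.
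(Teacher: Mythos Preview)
Your overall architecture is the paper's: the generator $L=\varphi\Delta-(\beta-1)\nabla\varphi\cdot\nabla$, the interpolation $\alpha(t)=-\int\Phi(P_tf)\,d\mvb$, and the goal $\alpha''\le -2c(\beta-1)\alpha'$. Your formula for $-J'(t)$ in terms of $\Gamma_2$, $\Phi^{(4)}\Gamma^2$, and $\Phi'''\Gamma(g,\Gamma(g))$ is also correct. The gap is in the last paragraph, where you propose to finish by a \emph{pointwise} completion of squares on ``the entire quadratic form''.

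That cannot work. Writing out $\Gamma_2$ explicitly gives
\[
\Gamma_2(g)=\varphi^2\|D^2g\|_{HS}^2+(\beta-1)\varphi\langle D^2\varphi\,\nabla g,\nabla g\rangle+\tfrac12\varphi\Delta\varphi\,|\nabla g|^2-\tfrac{\beta-1}{2}|\nabla\varphi|^2|\nabla g|^2+2\varphi\langle D^2g\,\nabla g,\nabla\varphi\rangle-\varphi\,\Delta g\,\langle\nabla g,\nabla\varphi\rangle,
\]
and the term $-\tfrac{\beta-1}{2}|\nabla\varphi|^2|\nabla g|^2$ is not controlled by anything pointwise: already for the Cauchy potential $\varphi=1+|x|^2$ one has $\tfrac12\varphi\Delta\varphi-\tfrac{\beta-1}{2}|\nabla\varphi|^2=n(1+|x|^2)-2(\beta-1)|x|^2\to-\infty$. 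So the ``remainder'' you defer to the delicate final step is not a nuisance to be tracked but the main obstruction; no amount of longitudinal/transverse bookkeeping on $D^2g$ will absorb an unbounded negative $|\nabla\varphi|^2$ coefficient.

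What the paper does instead is eliminate all $\nabla\varphi$-dependent integrals by three further integrations by parts, using the identity $\varphi\,\nabla\varphi\, d\mvb=-\tfrac{1}{\beta-2}\,\nabla\varphi^{-\beta+2}\,\tfrac{dx}{Z_{\vphi,\beta}}$ applied to $\int\Phi'''(g)|\nabla g|^2\langle\nabla\varphi,\nabla g\rangle\varphi\,d\mvb$, $\int\Phi''(g)\langle D^2g\,\nabla g,\nabla\varphi\rangle\varphi\,d\mvb$, and $\int\Phi''(g)\langle\nabla g,\nabla\varphi\rangle\Delta g\,\varphi\,d\mvb$. After this the integrand of $\alpha''+2c(\beta-1)\alpha'$ involves only $D^2g$, $\nabla g$, $\Delta g$ with weight $\varphi^2$ and constant (in $x$) coefficients depending on $\beta$; only then is the completion of squares done, via the two estimates $(\Delta g)^2\le n\|D^2g\|_{HS}^2$ and a Cauchy--Schwarz bound on $\tfrac{4(\beta-1)}{\beta-2}\langle D^2g\,\nabla g,\nabla g\rangle-\tfrac{1}{\beta-2}|\nabla g|^2\Delta g$. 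The constant $(4\beta-5)^2+n-1$ arises from $16(\beta-1)^2-8(\beta-1)+n$ in that Cauchy--Schwarz step, and the factor $(\beta-1)(\beta-n-1)$ from the coefficients $\tfrac{\beta-1}{\beta-2}$ on $\Phi^{(4)}$ and $\tfrac{\beta-n-1}{\beta-2}$ on $\Phi''\|D^2g\|_{HS}^2$ after the previous estimate---not from a longitudinal/transverse split of $D^2g$ as you suggest.
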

Let us give some comments on Theorem \ref{Phientropy}. The $\Phi-$entropy inequalities have been proved in \cite{BolleyGentil} for such function $\Phi$ under the curvature-dimension condition $CD(\rho,\infty)$ (see also \cite{Chafai}). Let $L$ be a differential operator of order $2$ given by
\[
Lf(x) = \sum_{i,j=1}^n D_{ij}(x) \frac{\pa^2 f}{\pa x_i \pa x_j}(x) - \sum_{i=1}^n a_i(x) \frac{\pa f}{\pa x_i}(x)
\]
where $D(x) = (D_{ij}(x))_{1\leq i,j \leq n}$ is a nonnegative symmetric $n\times n$ matrix in the matrix sense with smooth entires and $a(x) = (a_i(x))_{1\leq i\leq n}$ has smooth elements. Such an operator generates a semigroup $P_t$ acting on the smooth functions on $\R^n$ such that $L = \lt(\frac{\pa}{\pa t}\rt)_{t=0} P_t$. The \emph{carr\'e du champ} operator (see \cite{BE}) associated to $L$ (or semigroup $P_t$) is defined by
\[
\Gamma(f,g) = \frac{1}{2}\lt(L(fg) - f Lg -g Lf\rt).
\]
For simplicity, we write $\Gamma(f) = \Gamma(f,f)$. The $\Gamma_2$ operator is defined by
\[
\Gamma_2(f) = \frac12\lt(L\Gamma(f)  - 2\Gamma(f,Lf)\rt)
\]
We say that the operator $L$ (or semigroup $P_t$) satisfies the curvature-dimension condition $CD(\rho,\infty)$ for some $\rho \in \R$ if
\[
\Gamma_2(f) \geq \rho \Gamma(f),
\]
for all function $f$. This condition is a special case of the curvature--dimension condition $CD(\rho,m)$ with $\rho \in \R$ and $m\geq 1$ introduced by Bakry and \'Emery \cite{BE}. Let $d\mu = e^{-\psi} dx$ be a probability measure in $\R^n$ with $\psi$ being a convex function such that $D^2\psi(x) \geq \rho \I_n$ for any $x \in \R^n$ for some $\rho >0$, then the operator $L$ defined by
\[
Lf(x) = \Delta f(x) - \la \na \psi(x), \na f(x)\ra,
\]
where $\la \cdot, \cdot \ra$ denotes the scalar product in $\R^n$, satisfies the $CD(\rho,\infty)$ condition. Indeed, it is easy to see that $\Gamma(f,g) = \la \na f, \na g\ra$ and by Bochner--Lichnerowicz formula
\[
\Gamma_2(f) = \|D^2 f\|_{HS}^2 + \la D^2 \vphi(x) \na f(x), \na f(x)\ra,
\]
where $\|\cdot\|_{HS}$ denotes Hilbert-Schmidt norm on the space of symmetric matrices. It was proved by Bolley and Gentil \cite{BolleyGentil} for such measures that the following $\Phi-$entropy inequality with $\Phi$ satisfying $\Phi^{(4)} \Phi'' \geq 2 (\Phi^{(3)})^2$ holds
\begin{equation}\label{eq:PhientropyBG}
\ent_\mu^\Phi(f) \leq \frac1{2\rho} \int_{\R^n} \Phi''(f) |\na f|^2 d\mu.
\end{equation}
It is interesting that the $\Phi-$entropy inequality \eqref{eq:PhientropyBG} can be derived from Theorem \ref{Phientropy} by an approximation process. This will be shown at the end of Sect. \ref{provePhientropy} below. 

Taking the function $\Phi= \Phi_p:= t^{\frac 2p}$ on $(0,\infty)$. The function $\Phi_p$ satisfies the condition \eqref{eq:Phicondition} if 
\begin{equation}\label{eq:conditionp}
1 \leq p \leq p_\beta:= 1+ \frac{4(\be -1)(\beta -n-1)}{4(\beta -1)^2 + 4(3n-2)(\beta -1) + n} < 2.
\end{equation} 
Thus, we obtain the following Beckner-type inequalities for the measures $\mvb$ from Theorem \ref{Phientropy}.

\begin{corollary}\label{Becknertypeineq}
Let $\beta > n+1$ and $D^2\vphi \geq c \I_n$ for some $c >0$. Then for any $p \in [1,p_\beta]$ one has
\begin{equation}\label{eq:Beckner}
\int_{\R^n} f^2 d\mvb - \lt(\int_{\R^n} f^p d\mvb\rt)^{\frac2p} \leq \frac{2-p}{c (\beta -1)}\int_{\R^n} |\na f|^2 \varphi d\mvb,
\end{equation}
for any positive, smooth function $f$.
\end{corollary}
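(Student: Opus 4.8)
The plan is to derive \eqref{eq:Beckner} directly from Theorem \ref{Phientropy} by specializing $\Phi$ to the power function $\Phi_p(t) = t^{2/p}$ and then applying the resulting inequality to the function $g = f^p$. Since $f$ is positive and smooth, so is $g$, and $g$ takes values in $I = (0,\infty)$, so Theorem \ref{Phientropy} applies once we have checked that $\Phi_p$ meets the structural condition \eqref{eq:Phicondition}.

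First I would verify the claim \eqref{eq:conditionp}, namely that $\Phi_p$ satisfies \eqref{eq:Phicondition} precisely when $1 \le p \le p_\beta$. Writing $\alpha = 2/p$, the derivatives are $\Phi_p^{(k)}(t) = \alpha(\alpha-1)\cdots(\alpha-k+1)\, t^{\alpha - k}$, so that
\[
\Phi_p^{(4)}(t)\,\Phi_p''(t) = \alpha^2(\alpha-1)^2(\alpha-2)(\alpha-3)\, t^{2\alpha-6}, \qquad (\Phi_p^{(3)}(t))^2 = \alpha^2(\alpha-1)^2(\alpha-2)^2\, t^{2\alpha-6}.
\]
For $1 \le p < 2$ we have $1 < \alpha \le 2$, so $\alpha^2(\alpha-1)^2 > 0$ and the common power of $t$ cancels; condition \eqref{eq:Phicondition} thus reduces to the pointwise-free algebraic inequality $(\alpha-2)(\alpha-3) \ge C (\alpha-2)^2$, where $C = \tfrac18 \frac{(4\beta-5)^2 + n-1}{(\beta-1)(\beta-n-1)}$. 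The key point requiring care is the sign of the factor $\alpha - 2 \le 0$: factoring it out and dividing reverses the inequality into $\alpha - 3 \le C(\alpha - 2)$, i.e. $\alpha(1-C) \le 3 - 2C$. Since $\beta > n+1$ forces $C > 1$, this is equivalent to $\alpha \ge (2C-3)/(C-1)$, that is $p \le 2(C-1)/(2C-3)$. A direct computation, most cleanly carried out in the variable $u = \beta - 1$, shows that $2(C-1)/(2C-3)$ equals the expression defining $p_\beta$ in \eqref{eq:conditionp}; the borderline case $p=1$ (where $\alpha = 2$ and both sides vanish) is included trivially.

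With the condition verified, I would apply the inequality \eqref{eq:Phientropyineq} to $g = f^p$. The left-hand side becomes $\ent_{\mvb}^{\Phi_p}(f^p) = \int_{\R^n} f^2 \, d\mvb - \big(\int_{\R^n} f^p\, d\mvb\big)^{2/p}$, exactly the left-hand side of \eqref{eq:Beckner}. For the right-hand side, I would compute the integrand: using $\Phi_p''(s) = \tfrac{2(2-p)}{p^2}\, s^{(2-2p)/p}$ and $|\na(f^p)|^2 = p^2 f^{2p-2}|\na f|^2$, the powers of $f$ cancel to give the clean identity $\Phi_p''(f^p)\,|\na(f^p)|^2 = 2(2-p)\,|\na f|^2$. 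Substituting this together with the prefactor $\tfrac{1}{2c(\beta-1)}$ from \eqref{eq:Phientropyineq} yields precisely $\tfrac{2-p}{c(\beta-1)} \int_{\R^n} |\na f|^2 \vphi\, d\mvb$, which is the right-hand side of \eqref{eq:Beckner}.

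The bulk of the argument is routine substitution; the only genuinely delicate step is the algebraic reduction in the second paragraph, where the sign of $\alpha - 2$ and the inequality $C>1$ must be tracked carefully so as to land on the sharp threshold $p_\beta$ rather than its reciprocal or a reversed bound.
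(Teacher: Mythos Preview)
Your proposal is correct and follows exactly the approach indicated in the paper: specialize Theorem \ref{Phientropy} to $\Phi_p(t)=t^{2/p}$, verify via the derivative computation that \eqref{eq:Phicondition} reduces to $p\le p_\beta$, and then substitute $f^p$ for the test function so that the $\Phi$-entropy becomes the Beckner left-hand side and the integrand $\Phi_p''(f^p)|\nabla(f^p)|^2$ collapses to $2(2-p)|\nabla f|^2$. The paper merely states this without carrying out the algebra, so your write-up is in fact more detailed than the original.
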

If $\vphi(x) = 1 + |x|^2$, then the probability $d\mu_\beta = \frac1{Z_\beta} (1+ |x|^2)^{-\beta}$, $\beta > \frac n2$ is the generalized Cauchy measures. Notice that $D^2\vphi(x) = 2\I_n$. From Corollary \eqref{Becknertypeineq}, we obtain the following Beckner type inequalities for the Cauchy measures $\mu_\beta$: let $\beta > n+1$ and $p\in [1,p_\beta]$ then it holds
\begin{equation}\label{eq:BecknerCauchy}
\frac1{2-p}\lt(\int_{\R^n} f^2 d\mu_\beta - \lt(\int_{\R^n} f^p d\mu_\beta\rt)^{\frac2p}\rt) \leq \frac{1}{2 (\beta -1)}\int_{\R^n} |\na f|^2 (1+ |x|^2) d\mu_\beta
\end{equation}
for any positive, smooth function $f$. When writing this paper, I learned from the work of Bakry, Gentil and Scheffer \cite{BGS} that the inequality \eqref{eq:BecknerCauchy} can be proved by a different method based on the harmonic extensions on the upper-half plane and probabilistic representation and curvature-dimension inequalities with some negative dimensions. This method was initially introduced by Scheffer \cite{Scheffer}. It seems that the approach in \cite{BGS} is special for the Cauchy distributions and can not be applied for more general convex measures. For $p =1$ we obtain the sharp weighted Poincar\'e type inequality for Cauchy measures which was previously studied by Blanchet, Bonforte, Dolbeault, Grillo and Vazquez \cite{Blanchet1,Bonforte} with applications to the asymptotics of the fast diffusion equations \cite{Blanchet2,Bonforte} (see also \cite{BobkovLedoux,BJM16a,ABJ,VHN}): let $\beta \geq n+1$, then it holds
\[
\int_{\R^n} f^2 d\mu_\beta - \lt(\int_{\R^n} f d\mu_\beta\rt)^2 \leq \frac{1}{2 (\beta -1)}\int_{\R^n} |\na f|^2 (1+ |x|^2) d\mu_\beta
\]
for any smooth function $f$. It is remarkable that the constant $C_p = \frac{1}{2 (\beta -1)}$ in \eqref{eq:BecknerCauchy} is sharp in the sense that it can not be replaced by any smaller constant.
To see this, let $B_p$ denote the sharp constant in \eqref{eq:BecknerCauchy}, then obviously $B_p \leq \frac1{2(\beta-1)}$. For any smooth bounded function $g$ such that $\int_{\R^n} g d\mu_\beta =0$, applying \eqref{eq:BecknerCauchy} for $1 + \ep g$ with $\ep >0$ small enough and expanding the obtained inequality in term $\ep^2$, we get
\[
\ep^2 \int_{\R^n} g^2 d\mu_\beta + o(\ep^2) \leq B_p \ep^2 \int_{\R^n} |\na g|^2 \vphi d\mu_\beta,
\]
for $\ep >0$ small enough. Letting $\ep \to 0$ we have
\[
\int_{\R^n} g^2 d\mu_\beta \leq B_p \int_{\R^n} |\na g|^2 \vphi d\mu_\beta
\]
for any bounded smooth function $g$ with $\int_{\R^n} g d\mu_\beta =0$. This implies $B_p \geq B_1 = \frac1{2(\beta -1)}$. Consequently, we get $B_p = \frac1{2(\beta -1)}$.


The last remark concerning to Corollary \ref{Becknertypeineq} is that $p_\beta < 2$, hence we can not let $p \uparrow 2$ to obtain a weighted logarithmic Sobolev inequality for the convex measures $\mvb$ (or Cauchy measure $\mu_\beta$) with weighted $\vphi$. It's was shown in \cite{BobkovLedoux} that the weighted logarithmic Sobolev inequality for the Cauchy measures holds true with the weight $w(x) = (1 + |x|^2)^2 \ln (e + |x|^2)$. In \cite{CGW}, by using Lyapunov method, Cattiaux, Guillin and Wu found the correct order of magnitude of the weight in this inequality as $w(x) = (1+ |x|^2) \ln (e + |x|^2)$. Finally, we have $p_\beta \to 2$ as $\beta \to \infty$, we can see that the logarithmic Sobolev inequality for the uniform log-concave measure can be obtained from \eqref{eq:Beckner}. Indeed, suppose $d\mu = e^{-\psi} dx$ is a log-concave probability measure such that $D^2\psi \geq \rho \I_n$ for some $\rho >0$. For each $\beta > n+1$, consider the function $\vphi_\beta = 1 + \frac{\psi}\beta$ and the probability measure $\mu_{\vphi_\beta,\beta}$. We have $D^2 \vphi_\beta \geq c_\beta: = \frac{2\rho}\beta$. For any positive smooth function $f$, we apply \eqref{eq:Beckner} for $\mu_{\vphi_\beta,\beta}$, $f$ and $p =p_\beta$ and then let $\beta \to \infty$ with remark that $Z_{\vphi_\beta,\beta} \vphi_\beta^{-\beta} \to e^{-\psi}$ to obtain the following inequality
\[
\int_{\R^n} f^2 \ln f^2 d\mu -\int_{\R^n} f^2 d\mu \, \ln\lt(\int_{\R^n} f^2 d\mu\rt) \leq \frac{2}{\rho} \int_{\R^n} |\na f|^2 d\mu.
\]
Especially, when $\psi(x) = |x|^2/2$ we obtain the famous Gross's logarithmic--Sobolev inequality for Gaussian \cite{Gross}.

The second main result of this paper is the asymmetric covariance estimates for the convex measure $\mvb$. Let $\mu$ be a probability measure in $\R^n$. For any two real-valued function $g, h\in L^2(\mu)$, the covariance of $g$ and $h$ is quantity
\[
\text{\rm cov}_{\mu}(g,h) = \int_{\R^n} g h d\mu - \lt(\int_{\R^n} g d\mu\rt) \lt(\int_{\R^n} h d\mu\rt).
\]
Notice that $\text{\rm cov}_{\mu}(g,g) = \var_{\mu}(g)$. If $\mu$ is a log-concave measure, i.e., $d\mu = e^{-V(x)} dx$ for some strictly convex function $V$ on $\R^n$, the Brascamp--Lieb inequality (see \cite{BL}) asserts that
\begin{equation}\label{eq:BLinequality}
\var_\mu(h) \leq \int_{\R^n} \la (D^2 V)^{-1} \na h, \na h\ra d\mu,\quad h \in L^2(\mu).
\end{equation}
Since $(\text{\rm cov}_\mu(g,h))^2 \leq \var_\mu(g) \var_\mu(h)$, as an immediate consequence of \eqref{eq:BLinequality}, we have the following covariance estimate
\begin{equation}\label{eq:covestimate}
(\text{\rm cov}_\mu(g,h))^2 \leq \int_{\R^n} \la (D^2 V)^{-1} \na g, \na g\ra d\mu\, \int_{\R^n} \la (D^2 V)^{-1} \na h, \na h\ra d\mu.
\end{equation}
The one-dimensional variant of \eqref{eq:covestimate} was established by Menz and Otto \cite{MO} as follows
\begin{equation}\label{eq:MO}
|\text{\rm cov}_\mu(g,h)| \leq \|g'\|_{L^1(\mu)} \|(V'')^{-1} h'\|_{L^\infty(\mu)} = \int_\R |g'| d\mu\, \sup_{x\in \R} \frac{|h'(x)|}{V''(x)}.
\end{equation}
They call this inequality an asymmetric Brascamp--Lieb inequality. Note that it is asymmetric in
two respects: One respect is to take an $L^1$ norm of $g'$ and an $L^\infty$ norm of $h'$, instead of $L^2$ norm and $L^2$ norm. The second respect is that the $L^\infty$ norm is weighted with $(V''(x))^{-1}$ while the $L^1$ norm is not weighted.

The higher dimension version of \eqref{eq:MO} was proved by Carlen, Cordero--Erausquin and Lieb \cite{CCL}. In fact, they established a more general estimate as follows: let $\lam_{min}(x)$ denotes the smallest eigenvalued of $D^2V(x)$ then for any (locally) Lipschitz functions $f, g \in L^2(\mu)$ and for any $2 \leq p \leq \infty$ and $q =p/(p-1)$ we have
\begin{equation}\label{eq:CCL}
|\text{\rm cov}_\mu(g,h)| \leq \|(D^2 V)^{-\frac1q} \na g\|_{L^q(\mu)} \|\lam_{min}^{\frac{2-p}p} (D^2 V)^{-\frac1p} \na h\|_{L^p(\mu)}.
\end{equation}
The inequality \eqref{eq:CCL} is sharp in the sense that the constant $1$ in the right hand side can not be replaced by any smaller constant. For $p=2$ we recover \eqref{eq:covestimate} from \eqref{eq:CCL}. Since $D^2 V \geq \lam_{min} \I_n$ then \eqref{eq:CCL} implies
\[
|\text{\rm cov}_\mu(g,h)| \leq \|\lam_{min}^{-\frac1p} \na g\|_{L^p(\mu)} \|\lam_{min}^{-\frac1q} \na h\|_{L^q(\mu)}.
\]
For $p =\infty$ and $q =1$, we get
\[
|\text{\rm cov}_\mu(g,h)| \leq \|\na g\|_{L^\infty(\mu)} \|\lam_{min}^{-1} \na h\|_{L^1(\mu)}.
\]
In particular, if $n=1$ we obtain the inequality \eqref{eq:MO} of Menz and Otto.

In this paper, we extend the asymmetric covariance estimate \eqref{eq:CCL} to the convex measure $\mvb$. For $n \geq 1$ and $\beta \geq n+1$, let us denote
\[
p_{\be,n} =
\begin{cases}
\infty &\mbox{if $n =1$,}\\
2(1 + \frac{(\beta -1) (\beta -n-1) +((\be -1)(\be -2) (\be -n)(\be -n-1))^{\frac12}}{n-1})&\mbox{if $n\geq 2$.}
\end{cases}
\]
Our next result is the following theorem.
\begin{theorem}\label{asymmetriccov}
Let $\beta \geq n+1$ and $\lam_{min}$ denotes the smallest eigenvalue of $D^2\vphi(x)$. Then for any $2\leq p \leq p_{\beta,n}$, $q =p/(p-1)$ and any (locally) Lipschitz functions $g,h$ in $L^2(\mvb)$, we have
\begin{equation}\label{eq:covarianceconvex}
|\text{\rm cov}_{\mvb}(g,h)|\leq \frac1{\beta -1} \lt(\int_{\R^n} |(D^2\vphi)^{-\frac 1p} \na g|^q \vphi d\mvb\rt)^{\frac1q} \lt(\int_{\R^n} \lam_{min}^{2-p} |(D^2\vphi)^{-\frac1p} \na h|^p \vphi d\mvb\rt)^{\frac1p}.
\end{equation}
\end{theorem}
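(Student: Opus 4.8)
The plan is to adapt the Carlen--Cordero-Erausquin--Lieb argument, replacing the generator of the log-concave dynamics by the weighted operator naturally attached to $\mvb$. Concretely, I would work with
\[
L = \vphi\,\Delta - (\be-1)\la\na\vphi,\na\,\cdot\,\ra,
\]
which is symmetric in $L^2(\mvb)$ and has carr\'e du champ $\Gamma(f,g)=\vphi\la\na f,\na g\ra$; indeed $Lf = \vphi^{\be}\,\mathrm{div}(\vphi^{1-\be}\na f)$, so integrating by parts against $d\mvb = \vphi^{-\be}dx/Z_{\vphi,\be}$ gives $\int f\,Lg\,d\mvb = -\int\vphi\la\na f,\na g\ra\,d\mvb$. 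After a routine approximation reducing to smooth, fast-decaying data for which the weighted elliptic equation $-Lu = h-\int h\,d\mvb$ is solvable, I would use $u$ to represent the covariance:
\[
\text{\rm cov}_{\mvb}(g,h) = \int g\,(-Lu)\,d\mvb = \int\vphi\la\na g,\na u\ra\,d\mvb.
\]

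Next I would split the bilinear form by inserting the matrix weight $(D^2\vphi)^{1/p}$, writing $\la\na g,\na u\ra = \la(D^2\vphi)^{-1/p}\na g,(D^2\vphi)^{1/p}\na u\ra$ and applying H\"older with $\frac1p+\frac1q=1$ against the measure $\vphi\,d\mvb$:
\[
|\text{\rm cov}_{\mvb}(g,h)|\le\Big(\int|(D^2\vphi)^{-\frac1p}\na g|^q\vphi\,d\mvb\Big)^{\frac1q}\Big(\int|(D^2\vphi)^{\frac1p}\na u|^p\vphi\,d\mvb\Big)^{\frac1p}.
\]
This already produces the first factor of \eqref{eq:covarianceconvex}, so everything is reduced to the weighted gradient estimate asserting that $\int|(D^2\vphi)^{1/p}\na u|^p\vphi\,d\mvb$ is bounded by $(\be-1)^{-p}\int\lam_{min}^{2-p}|(D^2\vphi)^{-1/p}\na h|^p\vphi\,d\mvb$. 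To attack this I would differentiate the scalar equation componentwise: with $W=\na u$ one finds
\[
-L W + (\be-1)D^2\vphi\,W = \na h + (\Delta u)\,\na\vphi,
\]
where $L$ acts on each component. Testing this vectorial identity against the $p$-field $Y = |(D^2\vphi)^{1/p}W|^{p-2}(D^2\vphi)^{(2-p)/p}W$ weighted by $\vphi$, the curvature term $\int\la(\be-1)D^2\vphi\,W,Y\ra\vphi\,d\mvb$ reconstructs exactly $(\be-1)\int|(D^2\vphi)^{1/p}W|^p\vphi\,d\mvb$, the Dirichlet contribution $\int\la -LW,Y\ra\vphi\,d\mvb$ is nonnegative up to lower-order corrections, and the $\na h$ term is estimated by H\"older to yield the right-hand side, the power $\lam_{min}^{2-p}$ appearing through the lower bound $D^2\vphi\ge\lam_{min}\I$.

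The crux, and the step I expect to be the main obstacle, is that the variable coefficient $\vphi$ injects two features absent from the log-concave case: the extra drift $(\Delta u)\na\vphi$ in the equation for $W$ (which I would reinject through the scalar relation $\vphi\Delta u = (\be-1)\la\na\vphi,\na u\ra - (h-\int h\,d\mvb)$), and a genuine Bochner identity with indefinite terms involving $\na\vphi\otimes\na\vphi$, the reinjected $\Delta u$, and the spatial variation of the matrix weight $(D^2\vphi)^{1/p}$. Collecting these into a quadratic form in $\Delta u$, $\la\na\vphi,\na u\ra$ and $(D^2\vphi)^{1/p}\na u$ and requiring it to be nonnegative --- via a Cauchy--Schwarz that brings in the dimension through $\|D^2 u\|_{HS}^2\ge(\Delta u)^2/n$ --- is precisely what constrains $p$. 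I expect the discriminant of this form to vanish exactly at $p=p_{\be,n}$, which is why the threshold carries $\sqrt{(\be-1)(\be-2)(\be-n)(\be-n-1)}$ and the factor $n-1$ in the denominator; for $n=1$ the dimensional term degenerates and all $p$ up to $\infty$ are admissible, matching $p_{\be,1}=\infty$. As a consistency check, the base case $p=2$ collapses to the weighted Brascamp--Lieb inequality $\var_{\mvb}(h)\le\frac1{\be-1}\int\la(D^2\vphi)^{-1}\na h,\na h\ra\vphi\,d\mvb$, which already forces $\be\ge n+1$ and pins down the constant $\frac1{\be-1}$.
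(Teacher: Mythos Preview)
Your framework --- the operator $L=\vphi\Delta-(\be-1)\la\na\vphi,\na\cdot\ra$, the representation $\text{cov}_{\mvb}(g,h)=\int\la\na g,\na u\ra\vphi\,d\mvb$ via $Lu=h$, and the H\"older split with the insertion of $(D^2\vphi)^{1/p}$ --- is exactly the paper's, and reduces the theorem to the gradient estimate \[\int|(D^2\vphi)^{1/p}\na u|^p\vphi\,d\mvb\le(\be-1)^{-p}\int\lam_{min}^{2-p}|(D^2\vphi)^{-1/p}\na h|^p\vphi\,d\mvb.\] The gap is in your plan to obtain this estimate.

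Testing the commuted equation against $Y=|(D^2\vphi)^{1/p}W|^{p-2}(D^2\vphi)^{(2-p)/p}W$ does make the curvature term come out right, but it is not workable for general $\vphi$: when you integrate $\int\la-LW,Y\ra\vphi\,d\mvb$ by parts you must differentiate $Y$, and $\na Y$ contains derivatives of $(D^2\vphi)^{(2-p)/p}$ and of $|(D^2\vphi)^{1/p}W|$, i.e.\ third derivatives of $\vphi$. These terms carry no sign and there is no hypothesis on $D^3\vphi$; they are not ``lower-order corrections'' that can be absorbed. Similarly, your reinjection of $\Delta u$ through the scalar equation produces $\na\vphi\otimes\na\vphi$ contributions with no evident control. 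The paper sidesteps both issues by testing against the \emph{unweighted} field $|\na u|^{p-2}\na u$ (equivalently, by computing $\int L(|\na u|^p)\,\vphi\,d\mvb$). This keeps only second derivatives of $\vphi$ in play; the residual $\na\vphi$ terms are then removed not by substitution but by the integration-by-parts identity $\la\na\vphi,X\ra\vphi^{1-\be}=-\frac{1}{\be-2}\la\na\vphi^{-\be+2},X\ra$, which trades each $\na\vphi$ for an extra power of $\vphi$ and a divergence. After this one arrives at a clean identity in which the only possibly negative piece is a pointwise quadratic form in the eigenvalues of $D^2u$; the matrix weight $(D^2\vphi)^{1/p}$ is introduced only at the very last step via the elementary pointwise inequalities $|A^{1/p}v|^p\le|v|^{p-2}|A^{1/2}v|^2$ and $|v|\le\lam_{min}^{-1/p}|A^{1/p}v|$.

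A smaller point: the positivity of that residual quadratic form is not obtained from $\|D^2u\|_{HS}^2\ge(\Delta u)^2/n$. Writing $a_i=\la\na u,e_i\ra^2/|\na u|^2$ in the eigenbasis of $D^2u$, the form is \emph{affine} in $a\in\{a_i\ge0,\sum a_i=1\}$, so its minimum is at an extreme point $a_{i_0}=1$; one then applies Cauchy--Schwarz to the remaining $n-1$ eigenvalues, which is why $n-1$ (not $n$) appears in $p_{\be,n}$ and why the constraint disappears when $n=1$.
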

It is interesting that Theorem \ref{asymmetriccov} implies the asymmetric covariance estimates \eqref{eq:CCL} of Carlen, Cordero-Erausquin and Lieb for log-concave measure by letting $\beta \to \infty$. We will show this fact in Sect. \ref{Asym} below.

We conclude this introduction by giving some comments on the methods used to prove our Theorem \ref{Phientropy} and Theorem \ref{asymmetriccov}. Theorem \ref{Phientropy} is proved by using the semi-group method while Theorem \ref{asymmetriccov} is proved by adapting the $L^2-$method of H\"ormander \cite{Hormander} to the $L^p$ setting. Both the proofs concern to a differential operator $L$ on $L^2(\mvb)$ defined by
\[
L f(x) = \vphi(x) \Delta f(x) -(\beta -1) \la \na \vphi(x), \na f(x)\ra.
\]
To prove Theorem \ref{Phientropy}, we consider the semi-group $P_t$  on $L^2(\mvb)$ associated with $L$, and define the function
\[
\al(t) = -\int_{\R^n} \Phi(P_t f) d\mvb, \quad f \in L^2(\mvb).
\]
Using the semi-group property of $P_t$ and the assumption on $\Phi$, we will establish the following differential inequality $\alpha''(t) \leq -2c(\beta -1) \alpha'(t),\quad t > 0$,
which leads to the $\Phi-$entropy inequalities. We notice that the semi-group method is an useful methods to prove the functional inequalities (especially in sharp form). We refer the readers to the paper \cite{BolleyGentil,BGL,BE} and references therein for more details about this method an its applications. The $L^2-$approach of H\"ormander \cite{Hormander} is based on the classical dual representation for the covariance to establish the spectral estimates. In \cite{CCL}, Carlen, Cordero--Erausquin and Lieb adapted the $L^2$ approach of H\"ormander to the $L^p$ setting to prove the inequality \eqref{eq:CCL} for log-concave measure. Our proof of Theorem \ref{asymmetriccov} is an adaptation of their method to the setting of convex measures. However, the computations in our situation are more complicated. 

The rest of this paper is organized as follows. In Sect. \ref{provePhientropy} we use the semi-group method to prove the $\Phi-$entropy inequality in Theorem \ref{Phientropy} and show how derive the $\Phi-$entropy inequalities for uniform log-concave measures from Theorem \ref{Phientropy}. Sect. \ref{Asym} is devoted to prove the asymmetric covariance estimates for convex measures in Theorem \ref{asymmetriccov} and show how derive the inequality of Carlen, Cordero--Erausquin and Lieb from this theorem.

\section{Proof of Theorem \ref{Phientropy}}\label{provePhientropy}
This section is devoted to prove Theorem \ref{Phientropy}. Assume that $D^2\vphi \geq c \I_n$ for some $c >0$ and $\beta > n+1$. As in the introduction, let us define a differential operator $L$ of order $2$ on  $C_c^\infty(\R^n)$ by
\[
L f(x) = \vphi(x) \Delta f(x) - (\beta-1) \la \na\vphi(x), \na f(x)\ra, \quad f\in C_c^\infty(\R^n).
\]
By integration by parts, we have
\[
\int_{\R^n} (Lf)\, g\, d\mvb = - \int_{\R^n} \la \na f, \na g\ra \vphi d\mvb, \quad f, g \in C_c^\infty(\R^n).
\]
Since $D^2\vphi(x) \geq c \I_n$, $c >0$ then the following weighted Poincar\'e inequality holds (see \cite{VHN}):
\[
\var_{\mvb}(f) \leq \frac1{2c(\beta -1)} \int_{\R^n} |\na f|^2 \vphi(x) d\mvb, \quad f \in C_c^\infty(\R^n).
\]
Hence the operator $L$ is uniquely extended to a self-adjoint operator on $L^2(\mvb)$ (we still denoted the extended operator by $L$) with domain $\mathcal D(L)$. Notice that $C_c^\infty$ is dense in $\mathcal D(L)$ under the norm $(\|f\|_{L^2(\mvb)}^2 + \|Lf\|_{L^2(\mvb)}^2)^{\frac12}$. Let $P_t$ denote the semi-group on $L^2(\mvb)$ generated by $L$. For any $f \in L^2(\mvb)$ then $P_t f\in \mathcal D(L)$ and satisfies the equation
\[
\frac{\pa P_tf}{\pa t}(x) = L P_t f(x),\quad P_0f(x) = f(x).
\]
Moreover, $P_t f \to \int_{\R^n} f d\mvb$  in $L^2(\mvb)$ and $\mvb-$a.e. in $\R^n$ as $t\to \infty$. With these preparations, we are now ready to prove Theorem \ref{Phientropy}.

\begin{proof}[Proof of Theorem \ref{Phientropy}]
Let $f \in L^2(\mvb)$ such that $\int_{\R^n} |\na f|^2 \vphi(x) d\mvb < \infty$. Define the function
\[
\alpha(t) = -\int_{\R^n} \Phi(P_t(f)) d\mvb.
\]
By integration by parts, we have the following expression for $\alpha'(t)$
\begin{equation}\label{eq:alpha1st}
\al'(t) = -\int_{\R^n} \Phi'(P_t f) \, L P_t f d\mvb = \int_{\R^n} \Phi''(P_t f) |\na P_t f|^2 \vphi\, d\mvb.
\end{equation}
We next compute $\alpha''(t)$. For simplicity, we denote $g = P_tf$. It is easily to verify the following relation
\begin{equation}\label{eq:Commutation}
\pa_i (L g) = L (\pa_i g) + \pa_i \vphi \Delta g - (\beta-1) \sum_{j=1}^n \pa^2_{ij}\vphi \pa_j g,\quad i =1, 2, \ldots,n,
\end{equation}
where $\pa_i = \frac{\pa}{\pa x_i}$ and $\pa^2_{ij} = \frac{\pa^2}{\pa x_i \pa x_j}$. Using the relation \eqref{eq:Commutation} and integration by parts, we have
\begin{align}\label{eq:alpha2nd}
\al''(t) &= \int_{\R^n} \Phi^{(3)}(g) |\na g|^2 L g\, \vphi d\mvb + 2\int_{\R^n} \Phi''(g) \la \na g, \na Lg\ra\, \vphi d\mvb\notag\\
&=-\int_{\R^n} \la \na(\Phi^{(3)}(g) |\na g|^2 \vphi), \na g\ra \vphi d\mvb + 2 \int_{\R^n} \Phi''(g) \la \na g, L(\na g)\ra \, \vphi d\mvb\notag\\
&\quad + 2\int_{\R^n} \Phi''(g) \la \na g, \na \vphi\ra \Delta g\, \vphi d\mvb - 2(\beta -1)\int_{\R^n} \Phi''(g) \la D^2\vphi \na g, \na g\ra\, \vphi d\mvb,
\end{align}
here, for simplifying notation, we denote $L(\na g) = (L(\pa_1 g), \ldots, L(\pa_n g))$. It follows from intgeration by parts that
\begin{align}\label{eq:IBP0000}
\int_{\R^n} \Phi''(g)& \la \na g, L(\na g)\ra \, \vphi d\mvb\notag\\
& =-\sum_{i=1}^n\int_{\R^n} \la \na \pa_i g, \na(\Phi''(g) \pa_i g \vphi)\ra \vphi d\mvb\notag\\
&= -\int_{\R^n} \Phi^{(3)}(g)\la \na^2 g \na g, \na g\ra \vphi^2 d\mvb -\int_{\R^n} \Phi''(g) \|\na^2 g\|_{HS}^2 \vphi^2 d\mvb\notag\\
&\quad -\int_{\R^n} \Phi''(g) \la \na^2 g\na g, \na \vphi\ra \vphi d\mvb.
\end{align}
Noting that
\begin{equation}\label{eq:RELATION}
\na(\Phi^{(3)}(g) |\na g|^2 \vphi) = \Phi^{(4)}(g) |\na g|^2 \vphi \na g+ 2\Phi^{(3)}(g) \vphi D^2 g \na g + \Phi^{(3)}(g) |\na g|^2 \na \vphi.
\end{equation}
Plugging \eqref{eq:IBP0000} and \eqref{eq:RELATION} into \eqref{eq:alpha2nd} and using the uniform convexity assumption $D^2\vphi \geq c \I_n, c >0$ of $\vphi$ we obtain
\begin{align}\label{eq:alpha2nda}
\alpha''(t) 
&\leq -2c(\beta-1) \alpha'(t) -\int_{\R^n} \Phi^{(4)}(g) |\na g|^4 \vphi^2 d\mvb -4 \int_{\R^n} \Phi^{(3)}(g) \la D^2g \na g, \na g\ra \vphi^2 d\mvb\notag\\
&\quad -\int_{\R^n} \Phi^{(3)}(g)|\na g|^2 \la \na \vphi, \na g\ra \vphi d\mvb -2 \int_{\R^n} \Phi''(g) \|D^2 g\|_{HS}^2 \vphi^2 d\mvb\notag\\
&\quad -2 \int_{\R^n} \Phi''(g) \la D^2 g \na g, \na \vphi\ra \vphi d\mvb + 2\int_{\R^n} \Phi''(g) \la \na g, \na \vphi\ra \Delta g\, \vphi d\mvb.
\end{align}
Using again integration by parts, we have
\begin{align}\label{eq:IBP1}
\int_{\R^n} \Phi^{(3)}(g)&|\na g|^2 \la \na \vphi, \na g\ra \vphi d\mvb\notag\\
& = -\frac1{\beta -2} \int_{\R^n} \Phi^{(3)}(g)|\na g|^2 \la \na g, \na \vphi^{-\beta +2}\ra \frac{dx}{Z_{\vphi,\beta}}\notag\\
&= \frac1{\beta -2} \int_{\R^n} \Phi^{(4)}(g)|\na g|^4 \vphi^2 d\mvb + \frac{2}{\beta -2}\int_{\R^n} \Phi^{(3)}(g) \la D^2g \na g, \na g\ra \vphi^2 d\mvb\notag\\
&\quad  +\frac{1}{\beta -2}\int_{\R^n} \Phi^{(3)}(g) |\na g|^2 \Delta g\, \vphi^2 d\mvb,
\end{align}
\begin{align}\label{eq:IBP2}
&\int_{\R^n} \Phi''(g) \la D^2 g \na g, \na \vphi\ra \vphi d\mvb\notag\\
&\qquad\qquad= -\frac{1}{\beta -2}\int_{\R^n} \Phi''(g) \la D^2 g \na g, \na \vphi^{-\beta +2}\ra \frac{dx}{Z_{\vphi,\beta}}\notag\\
&\qquad\qquad= \frac{1}{\beta -2}\int_{\R^n} \Phi^{(3)}(g) \la D^2 g \na g, \na g\ra \vphi^2 d\mvb + \frac1{\beta -2} \int_{\R^n} \Phi''(g) \la \na \Delta g, \na g\ra \vphi^2 d\mvb\notag\\
&\qquad\qquad \quad + \frac1{\beta -2} \int_{\R^n} \Phi''(g) \|D^2 g\|_{HS}^2 \vphi^2 d\mvb,
\end{align}
and
\begin{align}\label{eq:IBP3}
\int_{\R^n} \Phi''(g) &\la \na g, \na \vphi\ra \Delta g\, \vphi d\mvb\notag\\
& = -\frac1{\beta -2} \int_{\R^n} \Phi''(g) \Delta g\, \la \na g, \na \vphi^{-\beta +2}\ra  \frac{dx}{Z_{\vphi,\beta}}\notag\\
&= \frac1{\beta -2} \int_{\R^n} \Phi^{(3)}(g)|\na g|^2 \De g\, \vphi^2 d\mvb + \frac1{\beta -2} \int_{\R^n} \Phi''(g) \la \na \Delta g, \na g\ra \vphi^2 d\mvb\notag\\
&\quad + \frac1{\beta -2} \int_{\R^n} \Phi''(g) (\Delta g)^2 \, \vphi^2 d\mvb.
\end{align}
Inserting \eqref{eq:IBP1}, \eqref{eq:IBP2} and \eqref{eq:IBP3} into \eqref{eq:alpha2nda}, we get
\begin{align}\label{eq:alpha2ndb}
\alpha''(t) &\leq -2c(\beta -1) \alpha'(t) -\frac{\beta -1}{\beta -2} \int_{\R^n} \Phi^{(4)}(g)|\na g|^4 \vphi^2 d\mvb\notag\\
&\quad -\int_{\R^n} \Phi^{(3)}(g) \lt(\frac{4(\beta -1)}{\beta -2} \la D^2 g\na g,\na g\ra - \frac1{\beta -2} |\na g|^2 \De g\rt) \vphi^2 d\mvb\notag\\
&\quad - 2\int_{\R^n} \Phi''(g) \lt(\frac{\beta -1}{\beta -2} \|D^2 g\|_{HS}^2 - \frac{1}{\beta -2} (\De g)^2\rt) \vphi^2 d\mvb.
\end{align}
It is well known that $(\De g)^2 \leq n \|D^2 g\|_{HS}^2$, then it holds
\begin{equation}\label{eq:matrixineq1}
\frac{\beta -1}{\beta -2} \|D^2 g\|_{HS}^2 - \frac{1}{\beta -2} (\De g)^2 \geq \frac{\beta -n -1}{\beta -2}\|D^2 g\|_{HS}^2.
\end{equation}
Let $\lam_1, \ldots, \lam_n$ denote the eigenvalue of $D^2 g$ with respect to the eigenvector $e_1, \ldots, e_n$ respectively such that $|e_i| =1$ for any $i =1,2,\ldots,n$. Denote $a_i =\frac{\la \na g,e_i\ra^2}{|\na g|^2}$ then it holds $a_1 + \cdots + a_n =1$, $a_i \geq 0$ for $i= 1,\ldots,n$. Using these notation, we have
\begin{align*}
\frac{4(\beta -1)}{\beta -2} \la D^2 g\na g,\na g\ra - \frac1{\beta -2} |\na g|^2 \De g &= |\na g|^2 \lt(\frac{4(\beta -1)}{\beta -2} \sum_{i=1}^n \lam_i a_i - \frac1{\beta -2} \sum_{i=1}^n \lam_i\rt)\\
&= |\na g|^2 \sum_{i=1}^n \frac{4(\beta -1) a_i -1}{\beta -2} \lam _i.
\end{align*}
Using Cauchy-Schwartz inequality, we have
\begin{align*}
\lt(\sum_{i=1}^n \frac{4(\beta -1) a_i -1}{\beta -2} \lam _i\rt)^2 &\leq \lt(\sum_{i=1}^n \lt(\frac{4(\beta -1)a_i -1}{\beta -2}\rt)^2\rt) (\lam_1^2 + \cdots + \lam_n^2)\\
&= \frac{(4(\beta -1))^2 \sum_{i=1}^n a_i^2 - 8(\beta -1) + n}{(\beta -2)^2} \|D^2 g\|_{HS}^2\\
&\leq \frac{16(\beta -1)^2 - 8(\beta -1) + n}{(\beta -2)^2} \|D^2 g\|_{HS}^2,
\end{align*}
here we used $\sum_{i=1}^n a_i =1$, $\sum_{i=1}^n a_i^2 \leq 1$ and $\|D^2 g\|_{HS}^2 = \sum_{i=1}^n \lam_i^2$. Putting the previous estimates together, we get
\begin{equation}\label{eq:matrixineq2}
\lt|\frac{4(\beta -1)}{\beta -2} \la D^2 g\na g,\na g\ra - \frac{|\na g|^2 \De g}{\beta -2} \rt| \leq \frac{((4\beta -5)^2 + n -1)^{\frac12}}{\beta -2} \|D^2 g\|_{HS} |\na g|^2.
\end{equation}
Plugging \eqref{eq:matrixineq1} and \eqref{eq:matrixineq2} into \eqref{eq:alpha2ndb} and using $\Phi'' \geq 0$, we obtain
\begin{align*}
\al''(t)& \leq -2c(\beta -1)\al'(t) -\frac{\beta -1}{\beta -2} \int_{\R^n} \Phi^{(4)}(g)|\na g|^4 \vphi^2 d\mvb\notag\\
&\quad + \frac{((4\beta -5)^2 + n -1)^{\frac12}}{\beta -2} \int_{\R^n} |\Phi^{(3)}(g)|\, \|D^2 g\|_{HS} |\na g|^2 \vphi^2 d\mvb\\
&\quad -2 \frac{\beta -n -1}{\beta -2} \int_{\R^n} \Phi''(g) \|D^2 g\|_{HS}^2 \vphi^2 d\mvb.
\end{align*}
It follows from the assumption on $\Phi$ and Cauchy--Schwartz inequality that
\begin{align*}
\frac{\beta-1}{\beta-2} \Phi^{(4)}(g)|\na g|^4 & + 2 \frac{\beta -n -1}{\beta -2} \Phi''(g) \|D^2 g\|_{HS}^2\\
&\geq 2\frac{\sqrt{2(\beta-1)(\beta-n-1)\Phi^{(4)}(g) \Phi''(g)}}{\beta -2} |\na g|^2 \|D^2 g\|_{HS}\\
&\geq \frac{((4\beta -5)^2 + n-1)^{\frac12}}{\beta -2} |\Phi^{(3)}(g)||\na g|^2 \|D^2 g\|_{HS}
\end{align*}
Therefore, it is easy to check that
\[
\alpha''(t) \leq -2c(\beta -1) \alpha'(t), \quad t >0.
\]
This differential inequality implies $\alpha'(t) \leq e^{-2c(\beta -1) t} \al'(0)$. Integrating the latter inequality on $(0, \infty)$ we obtain
\[
\lim_{t\to \infty} \alpha(t) - \alpha(0) \leq \frac{1}{2c(\beta -1)} \alpha'(0)
\]
which yields the $\Phi-$entropy inequality \eqref{eq:Phientropyineq} because 
\[
\alpha(0) = -\int_{\R^n} \Phi(f) d\mvb,\quad \alpha'(0) = \int_{\R^n} \Phi''(f) |\na f|^2 \vphi d\mvb,
\]
and
\[
\lim_{t\to \infty} \alpha(t)  = -\Phi\lt(\int_{\R^n} f d\mvb\rt)
\]
since $P_t f \to \int_{\R^n} f d\mvb$ in $L^2(\mvb)$. The proof of Theorem \ref{Phientropy} is then completely finished.
\end{proof}
We conclude this section by showing that the $\Phi-$entropy inequality \eqref{eq:PhientropyBG} can be derived from our Theorem \ref{Phientropy}. Let $\psi$ be a convex function on $\R^n$ such that $D^2\psi \geq \rho \I_n$ for some $\rho >0$ and $\int_{\R^n} e^{-\psi} dx =1$. Denote $\mu$ the measure on $\R^n$ with density $e^{-\psi}$. For $\beta > n+1$, denote $\vphi_\beta = 1 + \frac{\psi}{\beta}$. By the uniform convexity of $\psi$, we have $\vphi_\beta >0$ on $\R^n$ for $\beta$ large enough and $D^2\vphi_\beta \geq \be^{-1} \rho \I_n$. Denote $Z_{\vphi_\beta,\beta} = \int_{\R^n} \psi_\beta^{-\beta} dx$ and $\mu_{\vphi_\beta, \beta}$ the probability measure with density $Z_{\vphi_\beta,\beta}^{-1} \vphi_\beta^{-\beta}$. Our aim is to apply the $\Phi-$entropy inequality \eqref{eq:Phientropyineq} for the measure $\mu_{\varphi_\beta,\beta}$ and then letting $\beta \to \infty$ to derive the inequality \eqref{eq:PhientropyBG}. However, there is a difficulty here that although
\[
\lim_{\beta\to \infty} \frac1 8 \frac{(4\beta -5)^2 + n-1}{(\beta-1) (\beta -n -1)} =2,
\]
but
\[
\frac1 8 \frac{(4\beta -5)^2 + n-1}{(\beta-1) (\beta -n -1)} > 2,
\]
for any $\beta > n+1$. Hence for a convex function $\Phi$ satisfying $\Phi'' \Phi^{(4)} \geq 2 (\Phi^{(3)})^2$ we do not know whether or not it satisfies \eqref{eq:Phicondition}. To overcome this difficulty, we use a approximation process as follows. Denote by $I$ the domain of $\Phi$. Let $I_0 =(a,b)$ be a bounded interval in $I$ such that $\bar {I}_0 \subset I$. Denote $M =\sup_{I_0} |\Phi^{(3)}| < \infty$. Notice that the function $\Psi_p(t) = (t-a +1)^p$ for $p\in (1,2)$ satisfies 
\[
\Psi_p'' \Psi_p^{(4)} = \frac{3-p}{2-p} (\Psi_p^{(3)})^2 = \ga_p (\Psi_p^{(3)})^2, \quad \ga_p = \frac{3-p}{2-p} > 2.
\]
For $\ep >0$, consider the function $\Phi_\ep = \Phi + \ep \Psi_p$ on $I_0$. By Cauchy-Schwartz inequality, we have
\[
\Phi_\ep'' \Phi_\ep^{(4)} \geq (\sqrt{2} |\Phi^{(3)}| + \sqrt{\ga_p} \ep |\Psi_p^{(3)}|)^2
\]
on $I_0$. Denote $N = \inf_{I_0} |\Psi_p^{(3)}| > 0$. It is easy to check that
\[
(\sqrt{2} |\Phi^{(3)}| + \sqrt{\ga_p} \ep |\Psi_p^{(3)}|)^2 \geq \de ( |\Phi^{(3)}| + \ep |\Psi_p^{(3)}|)^2,
\]
on $I_0$, for any 
\[
2 < \de < \min\lt\{\sqrt{2\ga_p}, \frac{2M^2 + \ga_p \ep^2 N^2}{M^2 + \ep^2 N^2}\rt\}.
\]
Consequently, the function $\Phi_\ep$ satisfies the condition \eqref{eq:Phicondition} on $I_0$ for $\beta >0$ large enough. Applying the inequality \eqref{eq:Phientropyineq} for the convex function $\Phi_\ep$ and for any smooth function $f$ with value in $I_0$ and the probability measure $\mu_{\vphi_\beta,\beta}$ with $\beta$ large enough, we have
\[
\int_{\R^n} \Phi_\ep(f) d\mu_{\varphi_\beta} - \Phi_{\ep}\lt(\int_{\R^n} f d\mu_{\varphi_\beta}\rt) \leq \frac1{2 \frac{\rho}{\beta}(\beta -1)} \int_{\R^n} \Phi_\ep''(f) |\na f|^2 \vphi_\beta d\mu_{\vphi_\beta,\beta}.
\]
Notice that $Z_{\vphi_\beta,\beta}^{-1} \vphi_\beta^{-\beta} \to e^{-\psi}$ and $\vphi_\beta \to 1$. Letting $\beta \to \infty$ and then letting $\ep \to 0$, we get
\begin{equation}\label{eq:bound}
\int_{\R^n} \Phi(f) d\mu  - \Phi\lt(\int_{\R^n} f d\mu\rt) \leq \frac1{2\rho} \int_{\R^n} \Phi''(f) |\na f|^2 d\mu,
\end{equation}
for any smooth function $f$ with value in $I_0$ and for any bounded interval $I_0 \subset I$ with $\bar I_0 \subset I$. Suppose $I = (a,b)$, let $(a_n)_n, (b_n)_n$ be two sequence such that $a_n \downarrow a$ and $b_n\uparrow b$. For any smooth function $f$ with value in $I$, define $f_n = \max\{a_n, \min\{f, b_n\}\}$. Applying the inequality \eqref{eq:bound} for $I_n$ and $f_n$ and then letting $n \to \infty$ we obtain the inequality \eqref{eq:PhientropyBG} for $f$.

\section{Proof of Theorem \ref{asymmetriccov}}\label{Asym}
In this section, we prove the asymmetric covariance estimates given in Theorem \ref{asymmetriccov}. Our method is based on the $L^2$ method of H\"ormander which turns out to be very useful to prove the Brascamp--Lieb type and Poincar\'e type inequalities (see, e.g., \cite{VHN,CCL}). Again, let $L$ denote the differential operator 
\[
Lf(x) = \vphi(x) \Delta f(x) - (\beta -1) \la \na \vphi(x), \na f(x)\ra, \quad f\in C_c^\infty(\R^n).
\]
Note by integration by parts that
\[
\int_{\R^n}  g Lf d\mvb = -\int_{\R^n} \la \na g, \na f\ra \vphi d\mu,\quad f, g \in C_c^\infty(\R^n).
\]
hence $L$ is extended uniquely to self-adjoint operator in $L^2(\mvb)$ (which we still denote by $L$). By approximation argument, we can assume that $\vphi$ is uniform convex in $\R^n$. Consequently, if we denote $P_t$ the semi-group associated with $L$, then by the weighted Poincar\'e inequality, we see that $\|P_t h\|_{L^2_{\mvb}}$ exponentially decays to $0$ for any function $h\in L^2(\mvb)$ with $\int_{\R^n} h \,d\mvb =0$. For such a function $h$, the integral
\begin{equation}\label{eq:Bochnertp}
u := \int_0^\infty P_t h dt,
\end{equation}
exists and is in the domain of $L$, and satisfies $Lu =h$. 

Since
\[
\text{\rm cov}_{\mvb}(g,h) = \int_{\R^n} g(x) \lt(h(x) -\int_{\R^n} h d\mvb\rt) d\mvb,
\]
then $\text{\rm cov}_{\mvb}(g,h+c) = \text{\rm cov}_{\mvb}(g,h)$ for any constant $c$. Whence we can assume that $\int_{\R^n} h d\mvb =0$. Let $u$ define by \eqref{eq:Bochnertp}. We have by integration by parts and approximation argument that
\begin{equation}\label{eq:IBP*}
\text{\rm cov}_{\mvb}(g,h) = \int_{\R^n} g(x) h(x) d\mvb  = \int_{\R^n} g(x) L u(x) d\mvb =-\int_{\R^n} \la \na g, \na u\ra \vphi d\mvb.
\end{equation}
With these preparations, we are now ready to give the proof of Theorem \ref{asymmetriccov}.

\begin{proof}[Proof of Theorem \ref{asymmetriccov}]
We can assume $\int_{\R^n} h d\mvb =0$. Let $u$ define by \eqref{eq:Bochnertp}. Using \eqref{eq:IBP*} and  H\"older inequality, we have
\begin{align}\label{eq:Holder}
|\text{\rm cov}_{\mvb}(g,h)| & = \lt|\int_{\R^n} \la \na g, \na u\ra \vphi d\mvb\rt| = \lt|\int_{\R^n} \la (D^2\vphi)^{-\frac1p} \na g, (D^2\vphi)^{\frac1p} \na u\ra \vphi d\mvb\rt|\notag\\
&\leq \lt(\int_{\R^n} |(D^2\vphi)^{-\frac1p} \na g|^q \vphi d\mvb\rt)^{\frac1q} \lt(\int_{\R^n} |(D^2\vphi)^{\frac1p} \na u|^p \vphi d\mvb\rt)^{\frac1p}, 
\end{align}
here recall $q = p/(p-1)$. It remains to show that 
\begin{equation}\label{eq:suffice}
\lt(\int_{\R^n} |(D^2\vphi)^{\frac1p} \na u|^p \vphi d\mvb\rt)^{\frac1p} \leq \frac1{\beta -1} \lt(\int_{\R^n} \lam_{min}^{2-p} |(D^2\vphi)^{-\frac1p} \na h|^p \vphi d\mvb\rt)^{\frac1p},
\end{equation}
where $\lam_{min}$ is the smallest eigenvalue of $D^2\vphi$. To prove \eqref{eq:suffice}, we first compute $L(|\na u|^p)$ as follows
\begin{align}\label{eq:ex1}
L(|\na u|^p) & = \vphi \Delta (|\na u|^p) - (\beta -1) \la \na \vphi, \na (|\na u|^p)\ra\notag\\
&=p \vphi |\na u|^{p-2} \|D^2 u\|_{HS}^2 + p |\na u|^{p-2}\sum_{j=1}^n \vphi \Delta (\pa_j u) \pa_j u  + p(p-2) \vphi |\na u|^{p-4} |D^2 u\na u|^2\notag\\
&\quad - p(\beta -1) |\na u|^{p-2} \sum_{j=1}^n \lt(\sum_{i=1}^n \pa_i \vphi \pa_{ij}^2 u\rt) \pa_j u\notag\\
&= p |\na u|^{p-2} \lt(\la L(\na u), \na u\ra + \vphi \|D^2 u\|_{HS}^2 + (p-2) \vphi \frac{ |D^2 u\na u|^2}{|\na u|^2}\rt),
\end{align}
here we use the notation $L(\na u) = (L(\pa_1 u), \ldots, L(\pa_n u))$.

By integration by parts, we have
\begin{align}\label{eq:tpLnablaup*}
\int_{\R^n} L(|\na u|^p) \vphi d\mvb& = -\int_{\R^n} \la \na (|\na u|^p), \na \vphi\ra \varphi d\mvb  \notag\\
&= \frac1{\beta -2} \int_{\R^n} \la \na (|\na u|^p), \na \vphi^{-\beta +2}\ra \frac{dx}{Z_{\vphi,\beta}} \notag\\
&=-\frac{1}{\beta -2}\int_{\R^n} \Delta(|\na u|^p) \vphi^2 d\mvb.
\end{align}
We are readily to check that
\[
\De(|\na u|^p) = p\lt(\la \na \De u, \na u\ra + \|D^2 u\|_{HS}^2 + (p-2) \frac{|D^2u \na u|^2}{|\na u|^2}\rt) |\na u|^{p-2}.
\]
Plugging the previous identity into \eqref{eq:tpLnablaup*}, we arrive
\begin{align}\label{eq:tpLnablaup}
\int_{\R^n} &L(|\na u|^p) \vphi d\mvb\notag\\
&= -\frac{p}{\be -2} \int_{\R^n} \lt(\la \na \De u, \na u\ra + \|D^2 u\|_{HS}^2 + (p-2) \frac{|D^2 u \na u|^2}{|\na u|^2} \rt) |\na u|^{p-2}\vphi^2 d\mvb.
\end{align}
From \eqref{eq:Commutation}, we have 
\[
L(\na u) = \na (Lu) - \Delta u \, \na \vphi + (\beta -1) D^2\vphi \na u.
\]
Using this commutation relation together with \eqref{eq:ex1} and $Lu = h$, we get
\begin{align}\label{eq:tpLnablaup1}
\int_{\R^n}& L(|\na u|^p) \vphi d\mvb \notag\\
&= p\int_{\R^n} \la \na h, \na u\ra |\na u|^{p-2} \vphi d\mvb + p(\beta -1) \int_{\R^n} \la D^2 \vphi \na u, \na u\ra |\na u|^{p-2} \vphi d\mvb \notag\\
&\quad + p\int_{\R^n} \lt(\|D^2 u\|_{HS}^2 + (p-2) \frac{|D^2 u \na u|^2}{|\na u|^2} \rt) |\na u|^{p-2}\vphi^2 d\mvb\notag\\
&\quad -p \int_{\R^n} \Delta u\, |\na u|^{p-2} \la \na u, \na \vphi\ra \vphi d\mvb.
\end{align}
Using integration by parts, we have
\begin{align*}
\int_{\R^n} \Delta u\,& |\na u|^{p-2} \la \na u, \na \vphi\ra \vphi d\mvb \\
&= -\frac1{\beta -2} \int_{\R^n} \Delta u\, |\na u|^{p-2} \la \na u, \na \vphi^{-\beta +2}\ra \frac{dx}{Z_{\vphi,\beta}}\\
&=\frac1{\beta -2} \int_{\R^n} \lt(\la \na \De u, \na u\ra +(\De u)^2 + (p-2) \De u \frac{\la D^2 u\na u,\na u\ra}{|\na u|^2} \rt) |\na u|^{p-2} \vphi^2 d\mvb. 
\end{align*}
Inserting the previous equality into \eqref{eq:tpLnablaup1} implies 
\begin{align}\label{eq:tpLnablaup2}
&\int_{\R^n} L(|\na u|^p) \vphi d\mvb \notag\\
&\,= p\int_{\R^n} \la \na h, \na u\ra |\na u|^{p-2} \vphi d\mvb + p(\beta -1) \int_{\R^n} \la D^2 \vphi \na u, \na u\ra |\na u|^{p-2} \vphi d\mvb \notag\\
&\,\quad + p\int_{\R^n} \lt(\|D^2 u\|_{HS}^2 + (p-2) \frac{|D^2 u \na u|^2}{|\na u|^2} \rt) |\na u|^{p-2}\vphi^2 d\mvb\notag\\
&\,\quad -\frac p{\beta -2} \int_{\R^n} \lt(\la \na \De u, \na u\ra +(\De u)^2 + (p-2) \De u \frac{\la D^2 u\na u,\na u\ra}{|\na u|^2} \rt) |\na u|^{p-2} \vphi^2 d\mvb. 
\end{align}
Combining \eqref{eq:tpLnablaup} and \eqref{eq:tpLnablaup2}, we get
\begin{align}\label{eq:zero}
0& = p\int_{\R^n} \la \na h, \na u\ra |\na u|^{p-2} \vphi d\mvb + p(\beta -1) \int_{\R^n} \la D^2 \vphi \na u, \na u\ra |\na u|^{p-2} \vphi d\mvb \notag\\
&\quad +\frac{p}{\beta -2} \int_{\R^n} \lt( (\beta -1)\|D^2 u\|_{HS}^2  - (\De u)^2\rt)|\na u|^{p-2} \vphi^2 d\mvb \notag\\
&\quad + \frac{p(p-2)}{\beta -2} \int_{\R^n} \lt((\beta -1) \frac{|D^2 u \na u|^2}{|\na u|^2} - \Delta u \frac{\la D^2 u\na u,\na u\ra}{|\na u|^2} \rt)|\na u|^{p-2} \vphi^2 d\mvb.
\end{align}
We next claim that if $|\na u| > 0$ then
\begin{equation}\label{eq:claim}
 (\beta -1)\|D^2 u\|_{HS}^2  - (\De u)^2 + (p-2)\lt((\beta -1) \frac{|D^2 u \na u|^2}{|\na u|^2} - \Delta u \frac{\la D^2 u\na u,\na u\ra}{|\na u|^2} \rt) \geq 0
\end{equation}
provided $2 \leq p \leq p_{\beta,n}$. Indeed, if $n =1$ then the left hand side of \eqref{eq:claim} is equal to $(\beta -2) (p-1) |u''|^2$ and hence is non-negative. We next consider the case $n \geq 2$. Let $\lam_1, \ldots, \lam_n$ denote the eigenvalues of $D^2u$ with respect to the eigenvectors $e_1, \ldots, e_n$ respectively such that $|e_i| =1$ for any $i =1,\ldots,n$. Denote $a_i = \frac{\la \na u,e_i\ra^2}{|\na u|^2} \in [0,1]$. We have $a_1 + \cdots + a_n =1$, $\Delta u = \sum_{i=1}^n \lam_i$, $\|D^2 u\|_{HS}^2 = \sum_{i=1}^n \lam_i^2$, and
\[
\frac{|D^2 u \na u|^2}{|\na u|^2} = \sum_{i=1}^n \lam_i^2 a_i,\quad \frac{\la D^2 u\na u,\na u\ra}{|\na u|^2} = \sum_{i=1}^n \lam_i a_i.
\]
Hence, the left hand side of \eqref{eq:claim} becomes
\[
(\be-1) \sum_{i=1}^n \lam_i^2 - \lt(\sum_{i=1}^n \lam_i\rt)^2 + (p-2) \lt((\beta -1) \sum \lam_i^2 a_i - \lt(\sum_{i=1}^n \lam_i\rt) \sum_{i=1}^n \lam_i a_i\rt).
\]
The set $S:= \{x =(x_1, \ldots,x_n)\, :\, x_i \geq 0,\, i =1,\ldots, n,\quad \sum_{i=1}^n x_i =1\}$ is a convex subset of $\R^n$ with extreme points $v_i, i=1,\ldots,n$ such that the $i$th coordinate is $1$ and other coordinates are $0$. The function 
\[
F(x) = (\be-1) \sum_{i=1}^n \lam_i^2 - \lt(\sum_{i=1}^n \lam_i\rt)^2 + (p-2) \lt((\beta -1) \sum \lam_i^2 x_i - \lt(\sum_{i=1}^n \lam_i\rt) \sum_{i=1}^n \lam_i x_i\rt)
\]
is affine on $\R^n$. Hence $\min_{S} F$ is attained at a point $v_i$ for some $i \in \{1, \ldots, n\}$. Let $i_0$ be such an index $i$. Note that $a =(a_1,\ldots,a_n) \in S$, hence  we have 
\begin{align*}
F(a) &\geq (\beta -2)(p-1) \lam_{i_0}^2 + (\beta -1) \sum_{i\not=i_0} \lam_i^2 -p \lam_{i_0} \sum_{i\not=i_0} \lam _i -\lt(\sum_{i\not=i_0} \lam _i\rt)^2\\
&\geq (\beta -2)(p-1) \lam_{i_0}^2 + \frac{\beta -1}{n -1} \lt(\sum_{i\not=i_0} \lam _i\rt)^2-p \lam_{i_0} \sum_{i\not=i_0} \lam _i -\lt(\sum_{i\not=i_0} \lam _i\rt)^2\\
&= (\beta -2)(p-1) \lam_{i_0}^2 + \frac{\beta -n}{n -1} \lt(\sum_{i\not=i_0} \lam _i\rt)^2-p \lam_{i_0} \sum_{i\not=i_0} \lam _i\\
&\geq 2 \lt(\frac{(\beta -2)(\beta -n) (p-1)}{n-1}\rt)^{\frac12} |\lam_{i_0}| \lt|\sum_{i\not=i_0} \lam _i\rt| -p \lam_{i_0} \sum_{i\not=i_0} \lam _i
\end{align*}
here the second and fourth inequalities come from Cauchy--Schwartz inequality. Therefore $F(a) \geq 0$ provided
\[
2 \lt(\frac{(\beta -2)(\beta -n) (p-1)}{n-1}\rt)^{\frac12} \geq p,
\]
for $p\geq 2$. However, this condition is equivalent to our assumption $2 \leq p \leq p_{\beta,n}$. Hence, we have proved
\[
(\be-1) \sum_{i=1}^n \lam_i^2 - \lt(\sum_{i=1}^n \lam_i\rt)^2 + (p-2) \lt((\beta -1) \sum \lam_i^2 a_i - \lt(\sum_{i=1}^n \lam_i\rt) \sum_{i=1}^n \lam_i a_i\rt) = F(a) \geq 0,
\]
for $2 \leq p \leq p_{\beta,n}$. This proves \eqref{eq:claim} when $n \geq 2$.

It follows from \eqref{eq:zero} and \eqref{eq:claim} that
\begin{align}\label{eq:stepfini}
\int_{\R^n} |(D^2\vphi)^{\frac12} \na u|^2 |\na u|^{p-2}& \vphi d\mvb\notag\\
& = \int_{\R^n}\la D^2\vphi \na u, \na u\ra |\na u|^{p-2} \vphi d\mvb\notag\\
& \leq \frac1{\beta -1}\int_{\R^n} \la \na h, \na u\ra |\na u|^{p-2} \vphi d\mvb\notag\\
& = \frac1{\beta -1}\int_{\R^n} \la (D^2\vphi)^{\frac1p} \na u, (D^2\vphi)^{-\frac1p} \na h\ra |\na u|^{p-2}\vphi d\mvb\notag\\
&\leq \frac1{\beta -1}\int_{\R^n} | (D^2\vphi)^{\frac1p} \na u|\, | (D^2\vphi)^{-\frac1p} \na h|\, |\na u|^{p-2} \vphi d\mvb.
\end{align}
Let $A$ be a positive $n\times n$ matrix, and $v$ be a vector in $\R^n$. It is well-known that
\begin{equation}\label{eq:bdtmatrix}
|A^{\frac1p} v|^p \leq |v|^{p-2} |A^{\frac12} v|^2,
\end{equation}
for $p \geq 2$. Moreover, it is obvious that
\begin{equation}\label{eq:bdtmatrix1}
|\na u| \leq \lam_{min}^{-\frac1p} |(D^2\vphi)^{\frac1p} \na u|.
\end{equation}
Inserting the estimates \eqref{eq:bdtmatrix} and \eqref{eq:bdtmatrix1} into \eqref{eq:stepfini} for $A = D^2\vphi$ and $v = \na u$ with notice that $p\geq 2$, we get
\begin{equation}\label{eq:fini}
\int_{\R^n} |(D^2 \vphi)^{\frac1p} \na u|^p \vphi d\mvb \leq \frac1{\beta -1} \int_{\R^n}|(D^2 \vphi)^{\frac1p} \na u|^{p-1} \lam_{min}^{-\frac{p-2}p}| (D^2\vphi)^{-\frac1p} \na h| \vphi d\mvb.
\end{equation}
Applying H\"older inequality to the right hand side of \eqref{eq:fini} and simplifying the obtained inequality, we arrive \eqref{eq:suffice}. The proof of Theorem \ref{asymmetriccov} is completed.
\end{proof}

We conclude this section by showing that the inequality \eqref{eq:CCL} can be derived from our Theorem \ref{asymmetriccov}. Let $\psi$ be a strictly convex function on $\R^n$ such that $\int_{\R^n} e^{-\psi}dx =1$, and $\mu$ be the probability of density $e^{-\psi}$. Perturbing $\psi$ by $\ep |x|^2/2$, we can assume that $\psi$ is uniform convex on $\R^n$. Let $\vphi_\beta = 1 + \frac{\psi}\beta$ and $\mu_{\beta}$ be the probability measure of density $Z_{\beta}^{-1} \psi_\beta^{-\beta}$ for $\beta > n+1$ where $Z_{\beta}$ is normalization constant. We have $D^2\vphi_\beta = \beta^{-1} D^2\psi$. Denote $\lam_{min}$ and $\lam_{min,\beta}$ the smallest eigenvalue of $D^2\psi$ and $D^2\vphi_\beta$ respectively, we have $\lam_{min,\beta} = \beta^{-1} \lam_{min}$. Let $g,h \C_c^\infty(\R^n)$ and $2 \leq p < \infty$. We have $p_{\beta,n} \to \infty$ as $\beta \to \infty$, hence $p_{\beta,n} > p$ for $\beta$ large enough. Applying Theorem \ref{asymmetriccov} to $g, h$ and for $\mu_\beta$ with $\beta$ large enough, we have
\[
|\text{\rm cov}_{\mu_{\beta}}(g,h)| \leq \frac{\beta}{\beta-1}\lt(\int_{\R^n} |(D^2\psi)^{-\frac 1p} \na g|^q \vphi_\beta d\mu_{\beta}\rt)^{\frac1q} \lt(\int_{\R^n} \lam_{min}^{2-p} |(D^2\psi)^{-\frac1p} \na h|^p \vphi_\beta d\mu_{\beta}\rt)^{\frac1p}.
\]
Since $\vphi_\beta \to 1$ and $Z_\beta^{-1} \psi_\beta^{-\beta} \to e^{-\psi}$ as $\beta \to \infty$, then by letting $\beta\to \infty$ in the preceding inequality, we obtain \eqref{eq:CCL} for any function $g, h\in C_c^\infty(\R^n)$. By standard approximation argument, we get \eqref{eq:CCL} for any $2\leq p < \infty$. The case $p =\infty$ is obtained from the case $p < \infty$ by letting $p\to \infty$.
\section*{Acknowledgements}
This work was done when the author was PhD student at the Universit\'e Pierre et Marie Curie (Paris VI) under the supervision of Prof. Dario Cordero--Erausquin. The author would like to thank him for his help and advice.

\end{document}